\def\url@leostyle{%
 \@ifundefined{selectfont}{\def\UrlFont{\sf}}{\def\UrlFont{\scriptsize\ttfamily}}} \makeatother\urlstyle{leo}
\newtheorem{theorem}{Theorem}
\newtheorem{proposition}[theorem]{Proposition}
\newtheorem{corollary}[theorem]{Corollary}
\theoremstyle{definition}
\newtheorem{definition}[theorem]{Definition}
\theoremstyle{remark}
\numberwithin{equation}{section}
\numberwithin{theorem}{section}
\def\cB{\mathcal{B}}
\def\cM{\mathcal{M}}
\def\cN{\mathcal{N}}
\def\bE{\mathbb{E}}
\def\bN{\mathbb{N}}
\def\bP{\mathbb{P}}
\def\bR{\mathbb{R}}
\newcommand{\1}{\mathbbm{1}}            
\renewcommand{\d}{\operatorname{d}\!}   
\begin{document}
\title{A note on conditional covariance matrices for elliptical distributions}

\author{Piotr Jaworski\footnote{Institute of Mathematics, University of Warsaw, Warsaw, Poland.} , Marcin Pitera\footnote{Institute of Mathematics, Jagiellonian University, Cracow, Poland.} }

\date{\today}  

\maketitle




\abstract{
In this short note we provide an analytical formula for the conditional covariance matrices of the elliptically distributed random vectors, when the conditioning is based on the values of any linear combination of the marginal random variables. We show that one could introduce the univariate invariant depending solely on the conditioning set, which greatly simplifies the calculations. As an application, we show that one could define uniquely defined quantile-based sets on which conditional covariance matrices must be equal to each other if only the vector is multivariate normal. The similar results are obtained for conditional correlation matrices of the general elliptic case.\\
\\
{\bf Keywords:} elliptical distribution, conditional covariance, conditional correlation, tail covariance matrix, tail conditional variance.\\
{\bf MSC 2010:} 62H05, 60E05.
}

\section{Introduction}
Consider an $n$-dimensional elliptically distributed random vector $X$ with finite covariance matrix $\Sigma$. Let $Y$ denote any non-trivial linear combination of margins of $X$. In this short note we provide the analytical formula for conditional covariance matrix $\textrm{Var}[X | Y\in B]$, where $B$ is any Borel measurable set for which $\{Y\in B\}$ is of positive measure. We show that this conditional $n\times n$ matrix could be expressed as 
\[
\textrm{Var}[X | Y\in B]= k(B)\textrm{Var}[X]+\left(\textrm{Var}[Y|Y\in B]-k(B)\textrm{Var}[Y] \right)\beta\beta^{T},
\]
where $\beta$ is the vector of regression coefficients from $L^2$-orthogonal projection of margins of $X$ onto $Y$, and $k(B)$ is an invariant depending only on the characteristic generator of $X$, the probability distribution of $Y$, and set $B$. This substantially reduces the calculation of $\textrm{Var}[X | Y\in B]$ as we do not need to consider the conditional $n$-dimensional vector $X$. In particular, we show that for the multivariate normal random vector one only need to consider the conditional variance of $Y$ on set $B$ as the value of invariant $k(B)$ does not depend on the choice of $B$.

Next, we use this result to construct a set of quantile-based conditional sets, for which the corresponding covariance (or correlation) matrices must coincide, if only $X$ is from the particular distribution. This approach could be used to construct statistical test which checks if the sample is from the given multivariate distribution. As a direct application, we show that if we (approximately) split the probability space into three subsets: 
\[B_1 :=\{q(0.0) <Y \leq q(0.2)\},\quad B_2 := \{q(0.2)< Y \leq q(0.8)\},\]
\[B_3 :=\{q(0.8)< Y\leq q(1.0)\}, \]
where $q(\cdot):=F_Y^{-1}(\cdot)$, then for any multivariate normal vector $X$ the three conditional covariance matrices $\textrm{Var}[X | Y\in B_1]$, $\textrm{Var}[X | Y\in B_2]$, and $\textrm{Var}[X | Y\in B_3]$ must be equal to each other. Moreover, this (approximate) 20/60/20 division ratio is a unique ratio with this property. We present similar results for more conditioning sets and quantile-based divisions.

When financial applications are considered, the change of dependance structure in the quantile based conditioning areas is of particular interest, at it might correspond to {\it spatial contagion} between assets (see \cite{DurJaw2010,DurFosJawWan2014} and references therein) or increase of risk in the tails when so called {\it tail conditional variance} is considered in the risk measure framework (see \cite{FurLan2006,Val2005,Val2004} and references therein). In the financial context $Y$ might correspond to a financial portfolio, while in decision sciences it might be associated with the (benchmark) decision criterion.

This note is organised as follows: In Section~\ref{sec:preliminaries} we provide a set of some underlying concepts that will be used throughout the paper. In Section~\ref{sec:K} we introduce the notion of $K$-invariant which is the main tool used for study of conditional covariance and correlation matrices. Section~\ref{sec:matrix} is devoted to the study of conditional variance/covariance matrices. Here, we present the main result of this note -- Theorem~\ref{pr:cov}, which provides analytic formula for the conditional variance matrices.In Section~\ref{sec:normal} we consider the special case of multivariate normal vector. Theorem~\ref{T:normal} is the second main result of this note, and it shows that one might construct unique division partitions on which conditional covariance matrices are equal to each other.  Section~\ref{sec:correl}  provides a discussion about the correlation matrices. Proposition~\ref{pr:correlation2b} might be seen as analog of Theorem~\ref{T:normal} for the general elliptic case.

\section{Preliminaries}\label{sec:preliminaries}

Let $(\Omega,\cM,\bP)$ be a probability space and let us fix $n\in\bN$. Throughout this paper we assume that $X\sim E_n(\mu,\Sigma,\psi)$ which means that $X=(X_1,\ldots,X_n)^T$ is an $n$-dimensional elliptically distributed random vector with location parameter $\mu$, scale matrix $\Sigma$, and characteristic generator $\psi$ for which the equality
\[
\psi(x^T \Sigma x)=\bE\left[\exp(-ix^T(X-\mu))\right]
\]
is satisfied for any $x=(x_1, \dots, x_n)^T$, where $x_1, \dots ,x_n  \in \mathbb{R}$ (see~\cite{Cambanis,GomGomMar2003} for details). We also assume that $X$ admits the {\it canonical (stochastic) representation}  with a continuous distribution function given by 
\begin{equation}\label{eq:stochastic}
X\stackrel{d}{=}\mu + RAU,
\end{equation}
where
\begin{enumerate}[-]
\item $R$ is a positive random variable such that $\bE[R^2]=n$;
\item $A$ is a $n\times n$ invertible square matrix, such that $AA^T=\Sigma$, where $\Sigma$ is a positively defined (non-degenerate and finite) covariance matrix;
\item $U$ is a random vector independent of $R$ and uniformly distributed on the unit sphere of $\bR^n$. 
\end{enumerate}
The vector $U$ is singular as $U_1^2+ \dots +U_n^2=1$ and the margins of $U$ are uncorrelated as $\bE (UU^T)= \tfrac{1}{n} \textrm{Id}_n$. Also, for any orthogonal matrix $O$ we get $OU \stackrel{d}{=} U$. For technical reasons we write $U$ in a vertical form (as the $n\times 1$ matrix).
It is also useful to note that
\[
R^2=R^2 U^TU=R^2 U^T A^T \Sigma^{-1} A U\stackrel{d}{=}(X-\mu)^T \Sigma^{-1} (X-\mu)   ,
\]
and consequently
\[ \textrm{Var}[X]=\bE[(X-\mu)(X-\mu)^T] = \bE[R^2] A \bE[U U^T] A^T= \Sigma.\]
In other words, the scale matrix $\Sigma$ is in fact the covariance matrix of $X$ and
\[
\psi'(0)=-\frac{\bE[R^2]}{2n}=-\frac{1}{2}\,.\footnote{Note that for any elliptical distribution the scale matrix is proportional to the covariance matrix (as long as it exists) with scaling factor equal to $-2\psi'(0)$; see \cite[Theorem 4]{Cambanis} for details.}
\]

Many known families of elliptical distributions are associated with particular {\it radial distributions} of R. For example, if $X$ is a multivariate Gaussian vector, then it could be shown that $R$ is distributed as a $\chi$-distribution with $n$ degrees of freedom. In this particular case, given $\mu$ and $\Sigma$, we use the standard notation $X\sim \cN_{n}(\mu,\Sigma)$.

Next, for any random vectors $Z$ and $W$, and an $\cM$-measurable set $B$, such that $\mathbb{P}[B]>0$, we set 
 \begin{align*}
\bE_B[Z] & :=\bE[Z | B],\\
\textrm{Var}_B[Z] & :=\textrm{Var}[Z\, |\, B]=\bE\left[(Z-\bE_B[Z])(Z-\bE_B[Z])^{\textrm{T}}\,|\, B\right],\\
\textrm{Cov}_B[Z,W] & :=\textrm{Cov}[Z,W\, |\, B]=\bE\left[(Z-\bE_B[Z])(W-\bE_B[W])^{\textrm{T}}\,|\, B\right],
 \end{align*}
to denote conditional expectation, conditional variance matrix, and conditional covariance matrix on set $B$, respectively. Moreover, throughout this paper we fix
\[
a=(a_1,\ldots,a_n)\in\bR^n\setminus\{0\}
\]
and consider random variable
\[
Y:=aX=\sum_{i=1}^{n}a_iX_i,
\]
that is a linear combination of coordinates of $X$. We refer to $Y$ as a {\it benchmark for $X$}. Note that  when we $L^2$-orthogonally project $X_i$'s onto the line spanned by $Y$ then the "Least Square Error Models" are given by
\begin{equation}\label{E:econometric}
X_i= \beta_i Y+\varepsilon_i, \;\;\;\;  \textrm{Cov}(Y,\varepsilon_i)=0,
\end{equation}
where the vector of regression coefficients $\beta=(\beta_1,\ldots,\beta_n)^{T}$ is equal to
\[ \beta(X|Y):= \frac{\textrm{Cov}[Y,X]}{\textrm{Var}[Y]} =\frac{1}{a\Sigma a^T} \Sigma a^T.\]

\section{ $K$-invariant}\label{sec:K}
In this section we introduce two invariants which will be later crucial to the study of the conditional covariance and correlation matrices. 

\begin{definition}\label{def:invariant}
Given a characteristic generator $\psi$ and a Borel subset $A\subseteq (0,1)$ of positive Lebesgue measure, the {\it $K$-invariant} and {\it $K'$-invariant}  are given by
\begin{align*}
K(\psi,A) & = \textrm{Var}[V_2 |\, F_{1}(V_1) \in A],\\
K'(\psi,A) & =\frac{\textrm{Var}[V_2 |\, F_{1}(V_1) \in A]}{ \textrm{Var}[V_1 |\, F_{1}(V_1) \in A]},
\end{align*}
where $V=(V_1,V_2) \sim E_2(0,\textrm{Id}_2,\psi)$ and $F_{1}$ denotes the distribution function of $V_1$.
\end{definition}   

The {\it $K$-invariant} is simply a conditional variance of a standardized margin of an elliptical random vector conditioned on an (standardized) uncorrelated margin. It will be a key tool used in the conditional covariance matrix computation. On the other hand, the {\it $K'$-invariant} might be seen as a standardized version of $K$-invariant, which will be used for conditional correlation matrices analysis. For future reference please note that given $X\sim E_n(\mu,\Sigma,\psi)$, any $2$-dimensional margin of $X$ is also elliptical with the same generator and so is the random vector $(X,Y)$; see~\cite{GomGomMar2003} for details. Also, note that for a normal random vector uncorrelation implies independence, so in that special case $K$-invariant does not depend on $A$ and is always equal to one.

It is useful to note, that {\it $K$-invariant} could be expressed  in terms of so called {\it tail density} (cf.~\cite[Definition 2]{LanVal2003}) of the corresponding elliptical
 density generator (assuming it exists).

\begin{proposition}\label{prop:tail.density}
Let us assume that $V\sim E_2(0,\textrm{Id}_2,\psi)$ admits a density. Then, using notation from Definition~\ref{def:invariant}, we get
\begin{equation}\label{eq:variant}
K(\psi, A)=\frac{\mathbb{P}[W\in B]}{\mathbb{P}[V_1 \in B]},
\end{equation}
where $B=F^{-1}_{1}(A)$, $W$ is a random variable with density  
\begin{equation}\label{eq:distorted}
h(w)=c_1\int_{w^2/2}^{\infty}g_1(u)\d u,\quad w\in\bR,
\end{equation}
the function $g_1$ is the density generator for $E_1(0,1,\psi)$ and $c_1$ is the corresponding  normalizing constant.\footnote{Please see~\cite{LanVal2003} for the definition of $g_1$ and $c_1$.}
\end{proposition}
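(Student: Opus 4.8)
The plan is to reduce the two–dimensional statement to a one–dimensional computation by integrating out the radial variable and exploiting the conditional distribution of one margin of an elliptical vector given the other. The key structural fact is that for $V=(V_1,V_2)\sim E_2(0,\mathrm{Id}_2,\psi)$ with a density, the joint density has the form $f(v_1,v_2)=c_2\,g_2(v_1^2+v_2^2)$ for the $2$–dimensional density generator $g_2$, while each margin $V_i$ has density $c_1 g_1(v_i^2)$ where $g_1(t)=\int_t^\infty g_2(u)\,\mathrm{d}u$ up to the appropriate normalizing constants (this is the standard dimension–reduction identity for density generators, see the cited \cite{LanVal2003}). I would first write
\[
K(\psi,A)=\mathrm{Var}[V_2\mid F_1(V_1)\in A]=\bE\!\left[\,V_2^2\;\middle|\;V_1\in B\,\right]
\]
since $B=F_1^{-1}(A)$ and, by symmetry of the elliptical law, $\bE[V_2\mid V_1\in B]=0$; then expand this conditional second moment as a ratio of integrals over $B$ of the marginal density of $V_1$ weighted by $\bE[V_2^2\mid V_1=v_1]$.

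Second, I would compute the conditional second moment $m(v_1):=\bE[V_2^2\mid V_1=v_1]$ explicitly. The conditional density of $V_2$ given $V_1=v_1$ is proportional to $g_2(v_1^2+v_2^2)$ in the variable $v_2$, so
\[
m(v_1)=\frac{\int_{\bR} v_2^2\, g_2(v_1^2+v_2^2)\,\mathrm{d}v_2}{\int_{\bR} g_2(v_1^2+v_2^2)\,\mathrm{d}v_2}.
\]
The denominator is (a constant times) the marginal density of $V_1$ at $v_1$, i.e.\ proportional to $g_1(v_1^2)$. For the numerator I would use the substitution $v_2^2+v_1^2=u$ together with an integration by parts (or equivalently recognise that $\int v_2^2 g_2(v_1^2+v_2^2)\mathrm{d}v_2$ can be written via $\frac{\mathrm{d}}{\mathrm{d}t}\int g_2(t+v_2^2)\mathrm{d}v_2$), turning the numerator into an expression involving $\int_{v_1^2/2}^{\infty} g_1(u)\,\mathrm{d}u$. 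Matching constants, the upshot is that the numerator of $\bE[V_2^2\mathbf{1}_{\{V_1\in B\}}]$, after integrating $v_1$ over $B$, becomes exactly $\int_B h(v_1)\,\mathrm{d}v_1=\mathbb{P}[W\in B]$ for the $h$ defined in \eqref{eq:distorted}, while the denominator is $\mathbb{P}[V_1\in B]$. This gives \eqref{eq:variant}.

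So the skeleton is: (i) use ellipticity and symmetry to replace the variance by a conditional second moment with zero mean correction; (ii) write that second moment as a ratio of integrals over $B$; (iii) evaluate the inner $v_2$–integral in the numerator, via the substitution plus integration by parts, identifying it with (a constant multiple of) $\int_{v_1^2/2}^\infty g_1(u)\,\mathrm{d}u$; (iv) track the normalizing constants $c_1,c_2$ carefully so that the claimed density $h$ in \eqref{eq:distorted} comes out correctly normalised, which in turn forces the ratio to collapse to $\mathbb{P}[W\in B]/\mathbb{P}[V_1\in B]$.

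The main obstacle I expect is bookkeeping of the normalising constants and verifying that $h$ in \eqref{eq:distorted} is genuinely a probability density (i.e.\ that $c_1\int_{\bR}\int_{w^2/2}^\infty g_1(u)\,\mathrm{d}u\,\mathrm{d}w=1$); this is essentially the statement that $\bE[V_2^2]=\mathrm{Var}[V_2]=1$ for the standardized vector $V\sim E_2(0,\mathrm{Id}_2,\psi)$ (recall $-2\psi'(0)=1$ by the normalisation $\bE[R^2]=n$), so the total–mass check is not a coincidence but a consequence of the standardisation, and I would phrase it that way. A secondary technical point is justifying Fubini/interchange of integration when moving the $v_2^2$ weight through the integral and when doing the integration by parts (the boundary terms must vanish), which holds because $g_2$ is integrable against polynomial weights up to degree $2$ precisely because the covariance matrix of $V$ is assumed finite.
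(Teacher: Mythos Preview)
Your plan is sound and ends up running parallel to the paper's argument, but one identity you quote is wrong and, if used as stated, would derail the calculation. You assert that the one--dimensional generator satisfies $g_1(t)=\int_t^\infty g_2(u)\,\mathrm{d}u$ up to constants. This is \emph{not} the dimension--reduction formula from $2$ to $1$; that formula would correspond to integrating out \emph{two} coordinates (try the Student--$t$ family and compare exponents). The correct marginalisation is
\[
g_1(t)\ \propto\ \int_{\bR} g_2\!\left(t+\tfrac12 y^2\right)\mathrm{d}y\ \ \Big(\text{equivalently}\ \propto\int_t^\infty \frac{g_2(u)}{\sqrt{u-t}}\,\mathrm{d}u\Big).
\]
Your integration by parts indeed produces $\int_{\bR} G_2\bigl(\tfrac12 v_1^2+\tfrac12 v_2^2\bigr)\,\mathrm{d}v_2$ with $G_2(t)=\int_t^\infty g_2$; to finish you must expand $G_2$ as an integral, apply Fubini, and then invoke the \emph{correct} marginalisation above, which collapses the expression to $\int_{v_1^2/2}^\infty g_1(u)\,\mathrm{d}u$ as you claim. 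So the destination is right, but the signpost you wrote down points the wrong way.

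For comparison, the paper avoids integration by parts: after the substitution $v=\tfrac12 v_2^2$ it rewrites the resulting weight $\sqrt{2v}$ as $\int_0^{\sqrt{2v}}\mathrm{d}u$, swaps the order of integration, shifts, and then applies the same (correct) marginalisation $\int_{\bR} g_2(\,\cdot+\tfrac12 u^2)\,\mathrm{d}u=g_1(\cdot)$. The auxiliary--variable trick and your integration by parts are equivalent devices for trading the $v_2^2$ weight for an extra integral, so the two proofs are really the same computation in different dress. Your remark that the total--mass check $\int_{\bR} h=1$ is exactly the statement $\mathrm{Var}[V_2]=1$ under the standardisation $\bE[R^2]=n$ is correct and is the cleanest way to pin down the constants.
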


\begin{proof}
For $k=1,2, \dots $\,, we denote by $g_k$ the generator of the density  of $E_k(0,\textrm{Id}_k,\psi)$; see~\cite{LanVal2003}. In particular, $h_1(x):=g_1\left( \tfrac{1}{2}x^2\right)$ and $h_2(x,y):=g_2\left( \tfrac{1}{2}(x^2+y^2)\right)$ are the densities of $E_1(0,1, \psi)$ and $E_2(0,\textrm{Id}_2, \psi)$, respectively. We get
\begin{align*}
\textrm{Var}[V_2| V_1 \in B] 
&=\frac{1}{\mathbb{P}[V_1\in B]} \left(\bE\left[V_2^2 \1_{\{V_1 \in B\}}\right]-\bE^2\left[V_2 \1_{\{V_1 \in B\}}\right]\right)\\
&= \frac{1}{\mathbb{P}[V_1\in B]}\left(\int_{w\in B}\int_{-\infty}^{\infty}\frac{v^2}{1} \, g_2\left(\tfrac 12 (v^2 + w^2) \right)\d v\d w -0^2\right)\\
&= \frac{2}{\mathbb{P}[V_1\in B]}\int_{w\in B}\int_{0}^{\infty}\sqrt{2v}\, g_2\left(v+\tfrac 12 w^2\right)\d v\d w\\
&= \frac{2}{\mathbb{P}[V_1\in B]}\int_{w\in B}\int_{0}^{\infty}\int_0^{\sqrt{2v}}\, g_2\left(v+\tfrac 12w^2\right)\d u \d v\d w\\
&= \frac{2}{\mathbb{P}[V_1\in B]}\int_{w\in B}\int_{0}^{\infty}\int_{\frac{u^2}{2}}^{\infty}\, g_2\left(v+\tfrac 12 w^2\right)\d v \d u\d w\\
&= \frac{2}{\mathbb{P}[V_1\in B]}\int_{w\in B}\int_{0}^{\infty}\int_{0}^{\infty}\, g_2\left(v+\tfrac 12 u^2+ \tfrac 12 w^2\right)\d v \d u\d w\\
&= \frac{1}{\mathbb{P}[V_1\in B]}\int_{w\in B}\int_{0}^{\infty}\int_{-\infty}^{\infty}\, g_2\left(v+\tfrac 12 u^2+ \tfrac 12 w^2\right)\d u \d v\d w\\
&= \frac{1}{\mathbb{P}[V_1\in B]}\int_{w\in B}\int_{0}^{\infty}\, g_1\left(v+ \tfrac 12 w^2\right)\d v\d w\\
&= \frac{1}{\mathbb{P}[V_1\in B]}\int_{w\in B}h\left(w\right)\d w\\
&=  \frac{\mathbb{P}[W\in B]}{\mathbb{P}[V_1\in B]},
\end{align*}
which concludes the proof.
\end{proof}
Representation~\eqref{eq:variant} shows that $K(\psi,\cdot)$ could be considered as a form of {\it probability distortion function}; see~\cite{LanVal2003}. Moreover, one could show that the function defined in~\eqref{eq:distorted} is a density of a spherical random variable. We refer to~\cite{LanVal2003} for more details and examples of tail densities for many know elliptical families.

\section{Conditional variance/covariance matrices}\label{sec:matrix}

We are now ready to present the main result of this note -- Theorem~\ref{pr:cov}. It states that for the benchmark random variable $Y$ and  a conditioning set $B=\{Y \in B_1\}$, where $B_1\in\cB(\bR)$  and $\mathbb{P}[B]>0$, we can easily compute the conditional covariance matrix of $X$ on set $B$. To do so, we only need to take into account (univariate) conditional random variable $Y$ and the value of the corresponding $K$-invariant. This potentially simplifies the calculations, as we do not need to consider the conditional $n$-dimensional vector $X$.

\begin{theorem}\label{pr:cov}
Let $Y=aX$ for an elliptical random vector $X \sim E_n(\mu, \Sigma, \psi)$ with $\textrm{\emph{Var}}[X] < \infty$. 
Let $B=\{Y \in B_1\}$ be such that $B_1\in\cB(\bR)$  and $\mathbb{P}[Y \in B_1]>0$. Then, 
\begin{equation}\label{eq:main}
\textrm{\emph{Var}}_{B}[X] = k(B)\textrm{\emph{Var}}[X]+\left(\textrm{\emph{Var}}_{B}[Y]-k(B)\textrm{\emph{Var}}[Y] \right)\beta\beta^{T},
\end{equation}
where $k(B)= K\left(\psi, F_Y(B_1)\right)$ and $\beta = \beta(X|Y)$.
\end{theorem}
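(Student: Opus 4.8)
The plan is to reduce the $n$-dimensional computation to a $2$-dimensional one by using the stochastic representation $X \stackrel{d}{=} \mu + RAU$ together with the orthogonal decomposition of the sphere in the direction determined by the benchmark. First I would pass to the standardized setting: write $X - \mu = A R U$ and observe that $Y - \bE[Y] = a(X-\mu)$ depends on $U$ only through the scalar projection of $U$ onto the unit vector $u_0 := (Aa^T)/\|Aa^T\|$ in $\bR^n$. Since $OU \stackrel{d}{=} U$ for orthogonal $O$, I may rotate coordinates so that $u_0$ is the first standard basis vector; then $Y - \bE[Y] \stackrel{d}{=} \|Aa^T\|\, R U_1 = \sqrt{a\Sigma a^T}\, R U_1$, and the event $B = \{Y \in B_1\}$ becomes an event depending only on the first coordinate of the spherical decomposition, i.e.\ on $RU_1$, which is precisely the structure appearing in the definition of the $K$-invariant.

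Next I would compute $\textrm{Var}_B[X] = \bE_B[(X-\bE_B[X])(X-\bE_B[X])^T]$ by splitting $U$ (after rotation) as $U = (U_1, U_{-1})$ and correspondingly $X - \mu = R A U$. Conditioning on $B$ only constrains $RU_1$, so $RU_{-1}$ conditioned on $B$ still has a rotationally symmetric distribution in the remaining $n-1$ coordinates and remains uncorrelated with $RU_1$ (by the conditional version of $\bE[UU^T] = \tfrac1n \textrm{Id}_n$ restricted to a symmetric event in the first coordinate). Hence in the rotated frame the conditional covariance of $RU$ is diagonal with first entry $\textrm{Var}_B[RU_1]$ and remaining entries all equal to a common value $c(B)$; rotating back, $\textrm{Var}_B[RAU] = c(B)\,\Sigma + (\textrm{Var}_B[RU_1] - c(B))\, \tfrac{1}{a\Sigma a^T}(\Sigma a^T)(a\Sigma^T)$. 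Identifying $\tfrac{1}{a\Sigma a^T}\Sigma a^T = \beta$ and $\textrm{Var}_B[Y] = (a\Sigma a^T)\textrm{Var}_B[RU_1]$, and noting that the common off-direction value $c(B)$ times $n$... more carefully: the unconditional identity $\textrm{Var}[RU_1] = 1$ forces, when we intersect with the scaling, $c(B) = K(\psi, F_Y(B_1)) =: k(B)$ because $c(B)$ is exactly the conditional variance of an uncorrelated standardized margin given a symmetric event on the first margin, which is the definition of $K$. This yields \eqref{eq:main} after substituting $\textrm{Var}[X] = \Sigma$ and $\textrm{Var}[Y] = a\Sigma a^T$.

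\textbf{Main obstacle.}
The delicate point is justifying that, conditionally on $B$, the random vector $RU_{-1} \in \bR^{n-1}$ still has an exchangeable/rotationally-symmetric law whose covariance is a scalar multiple of $\textrm{Id}_{n-1}$, and that the cross-covariance between $RU_1$ and $RU_{-1}$ vanishes under the conditioning. This needs the fact that $B_1$ pulls back to an event symmetric under $U_1 \mapsto -U_1$ combined with $U_{-1} \mapsto -U_{-1}$ separately — more precisely, that the spherical symmetry of $U$ survives conditioning on any event measurable with respect to $|RU_1|$ together with the residual sign symmetry; care is needed because $B_1$ need not be symmetric about $\bE[Y]$. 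The clean way around this is to first pass to the canonical representation conditionally: write $R U_1 = S_1$ and note $(X-\mu)\mid B$ can be rebuilt as $S_1\cdot(Aa^T/\sqrt{a\Sigma a^T}) + $ (an independent elliptical vector supported on the orthogonal complement), so the cross terms and the isotropy on the complement follow from independence rather than from a symmetry argument; the conditional variance on the complement is then literally $K(\psi, F_Y(B_1))$ by Definition~\ref{def:invariant} and Proposition~\ref{prop:tail.density}. The rest is bookkeeping: translating between the $A$-coordinates and the $\Sigma$-coordinates and checking that $(Aa^T)(Aa^T)^T/(a\Sigma a^T) = \beta\beta^T(a\Sigma a^T)$, which is immediate from $AA^T = \Sigma$ and the formula for $\beta$.
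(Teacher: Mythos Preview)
Your overall strategy --- rotate the spherical vector so that the benchmark becomes the first coordinate, show that the conditional covariance of the rotated vector is diagonal with a common value $k(B)$ in the last $n-1$ slots, then transform back --- is exactly the paper's approach. Two remarks, one minor and one substantive.

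\medskip
\textbf{Minor.} The direction in $U$-space determined by $Y$ is $(aA)^T = A^T a^T$, not $Aa^T$; these differ unless $A$ is symmetric. Correspondingly $\|A^T a^T\|^2 = a\Sigma a^T$ while $\|Aa^T\|^2 = aA^TAa^T$ need not equal $a\Sigma a^T$. This slip propagates into your final bookkeeping identity, which should read $A(A^Ta^T)(A^Ta^T)^TA^T/(a\Sigma a^T) = \Sigma a^T a\Sigma/(a\Sigma a^T) = (a\Sigma a^T)\,\beta\beta^T$.

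\medskip
\textbf{Substantive.} Your resolution of the ``main obstacle'' via independence is a genuine gap. For a non-Gaussian elliptical vector the components $RU_1$ and $RU_{-1}$ (equivalently $Z_1$ and $Z_{-1}$ in the paper's notation) share the radial factor $R$ and are \emph{not} independent, so one cannot rebuild $(X-\mu)\mid B$ as $S_1\cdot v$ plus an independent elliptical piece on the orthogonal complement. What is true is that the \emph{direction} of $U_{-1}$ (i.e.\ $U_{-1}/\|U_{-1}\|$) is uniform on $S^{n-2}$ and independent of $(R,U_1)$, but the magnitude $R\|U_{-1}\|=R\sqrt{1-U_1^2}$ is certainly coupled to $Z_1=RU_1$.

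The irony is that the argument you first wrote down and then dismissed is the correct one, and it is precisely what the paper does. The conditioning event $\{Z_1\in B_1-a\mu\}$ depends only on $Z_1$, so the sign flip $Z_i\mapsto -Z_i$ for any fixed $i\ge 2$ preserves both the unconditional law of $Z$ (spherical symmetry) and the conditioning event, regardless of whether $B_1$ is symmetric about $\bE[Y]$. Hence $\bE_B[Z_iZ_j]=-\bE_B[Z_iZ_j]=0$ for $j\ne i$, and by permutation symmetry among the last $n-1$ coordinates all $\bE_B[Z_i^2]$, $i\ge 2$, coincide. No symmetry of $B_1$ is needed anywhere; the only place asymmetry shows up is that $\bE_B[Z_1]$ may be nonzero, which is already absorbed into $\textrm{Var}_B[Z_1]=\textrm{Var}_B[Y]$. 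Once you accept this, the identification $\bE_B[Z_2^2]=\hat a\,K(\psi,F_Y(B_1))$ follows directly from Definition~\ref{def:invariant} applied to the two-dimensional marginal $(Z_1,Z_2)/\sqrt{\hat a}\sim E_2(0,\textrm{Id}_2,\psi)$, and the rest is the linear-algebra transformation you outlined.
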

\begin{proof}

The proof is based on a stochastic representation \eqref{eq:stochastic}. We set 
 \[
Z:= \hat a^{1/2} ROU,
\]
where $\hat a :=a \Sigma a^T$ and $O$ is an orthogonal matrix such that
\[
aA = \hat a^{1/2} e_1 O,
\]
for $e_1=(1, 0, \dots , 0)$. Then, 
\begin{equation}\label{eq:X1}
X-\mu \stackrel{d}{=} RAU= \hat a^{-1/2} A O^TZ
\end{equation}
and
\[
Y-a\mu=a(X - \mu) \stackrel{d}{=} R(aA)U=\hat a^{1/2} R e_1 O U=e_1 Z=Z_1.
\]
Hence  $Z=(Z_1,\ldots,Z_n)$ is a spherical vector with  a version of  $Y-a\mu$  as the first coordinate; see \cite{GomGomMar2003}. 
Furthermore
\[Z \sim E_n(0, \hat{a} \textrm{Id}_n, \psi_X) \]
 and 
\[  \mathbb{E}(Z_2^2| Z_1 +a\mu \in B_1 ) = \ldots = \mathbb{E}(Z_n^2| Z_1 +a\mu \in B_1 )=  \hat{a} k(B).\]
Moreover, we know that
\[\left( Y- a\mu, X - \mu\right) \stackrel{d}{=} \left(Z_1, RAU \right),\]
which implies that conditioning by $Y$ coincides with conditioning by $Z_1 + a\mu$. For any Borel function $f$ we have
\[
\mathbb{E} (f(X-\mu) | Y \in B_1)= \mathbb{E}(f(RAU) | Z_1 + a\mu).
\]
Since $Z$ is a spherical random vector, the random vectors
\[ Z^{(i)}:=Z- 2Z_ie_i^T = (Z_1, \dots, -Z_i, \dots , Z_n)^T, \;\; i=2, \dots ,n,\]
have the same probability distribution as $Z$. 
The same is valid for conditional distributions
\[
Z^{(i)} | Z_1 \in B_1 - a\mu \stackrel{d}{=} Z | Z_1 \in B_1 -a\mu,  \;\; i=2, \dots , n .\]
Consequently, for $j\neq i$ we have
\[\bE_B[-Z_iZ_j]=\bE_B[Z_iZ_j]=0,
\]
and
\begin{align*}
\bE _B[Z Z^T] &=    \bE_B[Z_2^2]
\left[ \begin{array}{cc}
0&0\\
0&\textrm{Id}_{n-1}
\end{array} \right]
+\bE_B[Z_1^2] 
\left[ \begin{array}{cc}
1&0\\
0&0_{n-1}
\end{array} \right] \\
&= \bE_B[Z_2^2] \textrm{Id}_n +  \left(  \bE_B[Z_1^2] -   \bE _B[Z_2^2] \right)  
e_1^Te_1\\
&= \hat a k(B) \textrm{Id}_n +\left(  \bE_B[Z_1^2] -   \hat a k(B) \right)  
e_1^Te_1.
\end{align*}
Furthermore,
\begin{align*}
\textrm{Var}_B[Z] =& \bE _B[Z Z^T]  - \bE _B[Z] \bE _B[Z]^T
= \bE _B[Z Z^T] - \bE_B[Z_1]^2 e_1 e_1^T\\
=&\hat a k(B) Id_n + \left(  \textrm{Var}_B[Z_1] -   \hat a k(B)\right)  
e_1^Te_1.
\end{align*}
Hence, recalling \eqref{eq:X1}
and noting that $\hat a=\textrm{Var}[Z_1]$ as well as $Z_1 \stackrel{d}{=} Y- a\mu$,  we get
\begin{align}
\textrm{Var}_B[X] =& \textrm{Var}_B[\hat a^{-1/2} A O^TZ] = \hat a^{-1}AO^T \textrm{Var}_B[Z] O A^T \nonumber\\
=&  k(B) AO^T O A^T
+  \left(  \hat{a}^{-1} \textrm{Var}_B[Z_1] -   k(B)\right)   \ AO^Te_1^Te_1 OA^T\nonumber\\
=&  k(B)  \textrm{Var}[X]
+   \left(   \hat{a}^{-1} \textrm{Var}_B[Z_1] -   k(B)\right)  \hat a^{-1} \Sigma a^T a \Sigma \nonumber\\
=&  k(B) \textrm{Var}[X] 
+\left(   \hat{a}^{-1}  \textrm{Var}_B[Y] -   k(B)\right)  \hat a  \beta \beta^T,\label{eq:X2}
\end{align}
which concludes the proof of Formula (\ref{eq:main}).\\
\end{proof}

It is useful to note that the $K$-invariant from Theorem~\ref{pr:cov} could be expressed using the stochastic representation~\eqref{eq:stochastic}. Indeed, using notation from Theorem~\ref{pr:cov} it can be shown that
\[
K(\psi,F_Y(B_1))=\frac{1}{n-1}\left( \bE_B[R^2] - \frac{\bE_B[(Y-a\mu)^2]} {\textrm{Var}[Y]}\right).
\]

It is also interesting to note that the conditional covariance matrix of $X$ and $Y$ on set $B$ could be explicitly expressed in terms of $Y$ and  
the vector of regression coefficients $\beta(X|Y)$.
This is a statement of Proposition~\ref{pr:cov2}.

\begin{proposition}\label{pr:cov2}
Under assumptions and notations from Theorem~\ref{pr:cov} we get
\begin{equation}\label{eq:main1}
\textrm{\emph{Cov}}_{B}[X,Y] =\textrm{\emph{Var}}_{B}[Y]\beta(X|Y).
\end{equation}
\end{proposition}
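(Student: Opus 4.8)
The plan is to reuse the spherical decomposition established in the proof of Theorem~\ref{pr:cov}, namely the representation $X-\mu \stackrel{d}{=} \hat a^{-1/2} A O^T Z$ with $Y - a\mu \stackrel{d}{=} Z_1$, where $Z \sim E_n(0, \hat a\,\textrm{Id}_n, \psi_X)$ is spherical and $\hat a = a\Sigma a^T$. The target quantity $\textrm{Cov}_B[X,Y] = \bE_B[(X-\mu)(Y-a\mu)] - \bE_B[X-\mu]\,\bE_B[Y-a\mu]$ then becomes, after substituting the representation, $\hat a^{-1/2} A O^T \left(\bE_B[Z\,Z_1] - \bE_B[Z]\,\bE_B[Z_1]\right) = \hat a^{-1/2} A O^T \textrm{Cov}_B[Z, Z_1]$.

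The key observation is that $\textrm{Cov}_B[Z, Z_1]$ is an $n \times 1$ vector whose entries are $\textrm{Cov}_B[Z_i, Z_1]$. For $i \geq 2$, the spherical symmetry argument already used in the proof of Theorem~\ref{pr:cov} — the sign-flip $Z^{(i)} \stackrel{d}{=} Z$ preserved under conditioning on $\{Z_1 \in B_1 - a\mu\}$ — gives $\bE_B[Z_i Z_1] = 0$ and $\bE_B[Z_i] = 0$, hence $\textrm{Cov}_B[Z_i, Z_1] = 0$ for all $i \geq 2$. The only nonzero entry is the first, $\textrm{Cov}_B[Z_1, Z_1] = \textrm{Var}_B[Z_1]$. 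Therefore $\textrm{Cov}_B[Z, Z_1] = \textrm{Var}_B[Z_1]\, e_1^T$, and plugging back in yields $\textrm{Cov}_B[X,Y] = \hat a^{-1/2} \textrm{Var}_B[Z_1]\, A O^T e_1^T$.

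To finish, I would identify $A O^T e_1^T$ using the defining relation $aA = \hat a^{1/2} e_1 O$, equivalently $A^T a^T = \hat a^{1/2} O^T e_1^T$, so $O^T e_1^T = \hat a^{-1/2} A^T a^T$ and hence $A O^T e_1^T = \hat a^{-1/2} A A^T a^T = \hat a^{-1/2} \Sigma a^T$. Combining, $\textrm{Cov}_B[X,Y] = \hat a^{-1} \textrm{Var}_B[Z_1]\, \Sigma a^T = \textrm{Var}_B[Y] \cdot \frac{1}{a\Sigma a^T}\Sigma a^T = \textrm{Var}_B[Y]\, \beta(X|Y)$, using $\textrm{Var}_B[Z_1] = \textrm{Var}_B[Y]$ (since $Z_1 \stackrel{d}{=} Y - a\mu$ jointly with the conditioning event) and the formula for $\beta(X|Y)$ from \eqref{E:econometric}.

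I do not expect a serious obstacle here; the argument is essentially a lighter version of the computation already carried out for Theorem~\ref{pr:cov}, with the $n \times n$ matrix $\bE_B[ZZ^T]$ replaced by the $n \times 1$ vector $\bE_B[Z Z_1]$. The one point requiring a small amount of care is the same as in the theorem: justifying that conditioning on the event $\{Y \in B_1\}$ is the same as conditioning on $\{Z_1 + a\mu \in B_1\}$ and that the joint law $(Y - a\mu, X - \mu) \stackrel{d}{=} (Z_1, RAU)$ lets us pass the conditional covariance through the linear map $\hat a^{-1/2} A O^T$ — but this is inherited verbatim from the proof of Theorem~\ref{pr:cov}, so it can simply be invoked.
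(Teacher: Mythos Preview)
Your argument is correct, but it takes a different route from the paper. The paper's proof does not re-enter the spherical representation at all: since $Y=aX$, one has $\textrm{Cov}_B[X,Y]=\textrm{Var}_B[X]\,a^T$, and the right-hand side is already known from Theorem~\ref{pr:cov}. Substituting \eqref{eq:main} and using the single algebraic identity $a\beta=1$ (equivalently $\textrm{Var}[X]a^T=\textrm{Cov}[X,Y]=\textrm{Var}[Y]\beta$) collapses the expression to $\textrm{Var}_B[Y]\beta$ in two lines. Your approach instead revisits the machinery of the proof of Theorem~\ref{pr:cov}, computing $\textrm{Cov}_B[Z,Z_1]$ directly via the sign-flip symmetry and then transforming back through $\hat a^{-1/2}AO^T$. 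This is perfectly valid and self-contained, but it duplicates work: everything you compute is a strict subset of what was already established for $\textrm{Var}_B[Z]$ in the theorem. The paper's version is shorter and makes transparent that Proposition~\ref{pr:cov2} is a formal corollary of the \emph{statement} of Theorem~\ref{pr:cov} rather than of its proof; your version has the minor advantage of not invoking the $K$-invariant $k(B)$ at any intermediate step, which the paper's proof does (only to have it cancel).
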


\begin{proof}
Let $\beta:=\beta(X|Y)$. To prove Proposition~\ref{pr:cov2} it is enough to note that $a\beta =1$. 
Using notation from Theorem~\ref{pr:cov}  we immediately get
\begin{align*}
\textrm{Cov}_B[X,Y] &= \textrm{Var}_B[X] a^T\\
& =k(B) \textrm{Var}[X]a^T + (\textrm{Var}_B[Y] -k(B) \textrm{Var}[Y]) \beta \\
&= k(B)(\textrm{Cov}[X,Y] - \textrm{Var}[Y]\beta ) +\textrm{Var}_B[Y] \beta\\
& = \textrm{Var}_B[Y] \beta.
\end{align*}
\end{proof}
Equality~(\ref{eq:main1}) has interesting linear modelling consequences.  Let $\gamma$ be a  horizontal vector from $\mathbb{R}^n$.
If a random variable $\gamma X$ is uncorrelated with $Y$,  then the same remains valid under conditioning on set $\{Y \in B_1\}$. Indeed,
\begin{align*}
\textrm{Cov}_B[\gamma X,Y] &=\gamma \textrm{Cov}_B[X,Y]  =  \textrm{Var}_B[Y]  \gamma \beta(X|Y)= \frac{\textrm{Var}_B[Y] }{ \textrm{Var}[Y] } \gamma \textrm{Cov}[X,Y]\\
&= \frac{\textrm{Var}_B[Y] }{ \textrm{Var}[Y] } \textrm{Cov}[\gamma X,Y]=0.\nonumber
\end{align*}
Hence, 
the  "Least Square Error Models" introduced in (\ref{E:econometric}) remain  invariant under conditioning of $Y$. In other words, for $\beta_i=\beta_i(X|Y)$ we get
\begin{equation}
X_i= \beta_i Y+\varepsilon_i, \;\;\;\;  \textrm{Cov}_B(Y,\varepsilon_i)=0,
\end{equation}
and the $L^2$-orthogonal projection commutes with conditioning of $Y$. Note that for the case where $B_1$ is a half-line, the conditional covariance matrix $\textrm{Cov}_B[X,Y]$
was considered in \cite{Val2005}; see \cite[Theorem 1]{Val2005} and \cite[Theorem 3]{Val2005} for details. 


\section{Multivariate Normal}\label{sec:normal}
In this section we discuss in details a special case when $X\sim \cN_{n}(\mu,\Sigma)$.  We recall that for the normal characteristic generator $\psi(t)=\exp(-\tfrac t2)$ the $K$-invariant is independent of $A$ and always equal to one. Combining this with Theorem~\ref{pr:cov} we get the following corollary (cf. \cite[Lemma 3.2]{JawPit2015}).

\begin{corollary}\label{cor:1}
Let $X\sim \cN_{n}(\mu,\Sigma)$ and  $B=\{Y \in B_1\}$ be such that $B_1\in\cB(\bR)$  and $\mathbb{P}[Y\in B_1]>0$. Then,
\begin{equation}\label{eq:main_1}
\textrm{\emph{Var}}_{B}[X] = \textrm{\emph{Var}}[X]+\left(\textrm{\emph{Var}}_{B}[Y]-\textrm{\emph{Var}}[Y] \right)\beta\beta^{T},
\end{equation}
where
\[
\beta=(\beta_1,\ldots,\beta_n)^{T},\quad \beta_i=\frac{\emph{Cov}[Y,X_i]}{\emph{Var}[Y]}. 
\]
\end{corollary}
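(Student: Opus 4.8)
The plan is to derive Corollary~\ref{cor:1} as an immediate specialization of Theorem~\ref{pr:cov}. The only substantive input needed is the fact, already recalled in the text, that for the multivariate normal characteristic generator $\psi(t)=\exp(-t/2)$ the $K$-invariant satisfies $K(\psi,A)=1$ for every Borel set $A\subseteq(0,1)$ of positive Lebesgue measure. I would first justify this claim cleanly: if $V=(V_1,V_2)\sim E_2(0,\textrm{Id}_2,\psi)$ is standard bivariate normal, then $V_1$ and $V_2$ are \emph{independent} standard normals (uncorrelated jointly normal coordinates are independent), so the conditioning event $\{F_1(V_1)\in A\}$ is measurable with respect to $\sigma(V_1)$ and is independent of $V_2$; hence $\textrm{Var}[V_2\mid F_1(V_1)\in A]=\textrm{Var}[V_2]=1$, which is exactly $K(\psi,A)=1$.

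Next I would simply invoke Theorem~\ref{pr:cov} with this generator. Since $X\sim\cN_n(\mu,\Sigma)$ is in particular elliptical with $\textrm{Var}[X]=\Sigma<\infty$, and $B=\{Y\in B_1\}$ with $\bP[Y\in B_1]>0$, the theorem gives
\[
\textrm{Var}_B[X]=k(B)\,\textrm{Var}[X]+\bigl(\textrm{Var}_B[Y]-k(B)\,\textrm{Var}[Y]\bigr)\beta\beta^T,
\]
where $k(B)=K(\psi,F_Y(B_1))$ and $\beta=\beta(X|Y)$. By the previous paragraph $k(B)=1$, so the formula collapses to
\[
\textrm{Var}_B[X]=\textrm{Var}[X]+\bigl(\textrm{Var}_B[Y]-\textrm{Var}[Y]\bigr)\beta\beta^T,
\]
which is~\eqref{eq:main_1}. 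Finally I would note that the displayed expression for the regression coefficients, $\beta_i=\textrm{Cov}[Y,X_i]/\textrm{Var}[Y]$, is just the component form of $\beta(X|Y)=\Sigma a^T/(a\Sigma a^T)$ from~\eqref{E:econometric}, since $\textrm{Cov}[Y,X]=\textrm{Cov}[aX,X]=a\Sigma$ and $\textrm{Var}[Y]=a\Sigma a^T$.

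There is essentially no obstacle here: the corollary is a one-line consequence once the value of the $K$-invariant in the Gaussian case is known. The only point that warrants a sentence of care is the assertion $K(\psi,A)=1$, and the mild subtlety is that it must hold for \emph{all} admissible $A$, not just intervals — this is precisely where the independence (rather than mere uncorrelatedness) of the standardized margins in the normal case is used. Everything else is substitution and rewriting $\beta(X|Y)$ in coordinates.
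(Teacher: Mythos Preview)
Your proposal is correct and mirrors the paper's own derivation: the paper simply recalls that for the Gaussian generator $\psi(t)=\exp(-t/2)$ the $K$-invariant equals $1$ and then states Corollary~\ref{cor:1} as an immediate consequence of Theorem~\ref{pr:cov}. Your additional sentence justifying $K(\psi,A)=1$ via independence of $V_1$ and $V_2$ is a welcome clarification but does not depart from the paper's approach.
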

From Corollary~\ref{cor:1}  we see that the conditional covariance matrix $\textrm{Var}_{B}[X]$ depend on $B$ only through the variance of the benchmark on set $B$, i.e. value $\textrm{Var}_{B}[Y]$. Consequently, if we have any number of Borel sets $B_1,\ldots,B_k$ such that
\[
\textrm{Var}[Y \mid  Y \in B_1]=\ldots=\textrm{Var}[Y \mid  Y \in B_k],
\]
then we immediately get equality of corresponding covariance matrices
\[
\textrm{Var}_{\{Y\in B_1\}}[X]=\ldots=\textrm{Var}_{\{Y\in B_k\}}[X].
\] 
Moreover, given $a\in\bR^n\setminus\{0\}$, it is easy to show that for any $k\in\bN$  one could find (a unique) partition of $\bR$ into Borel subsets
\[
(-\infty,b_1), \,[b_1,b_2),\,\ldots,\,[b_{k-1},b_k),\,[b_k,\infty),
\]
where $b_1<b_2<\ldots<b_k$ are such that 
\[
\textrm{Var}[Y \mid  Y \in (-\infty,b_1)]=\textrm{Var}[Y \mid  Y \in [b_1,b_2)]=\ldots=\textrm{Var}[Y \mid  Y \in [b_k,\infty)].
\]
While the sequence $(b_1,\ldots,b_k)$ depend on the choice of $a\in \bR^n\setminus\{0\}$ it could be expressed as 
\[
(b_1,\ldots,b_k)=(q(\alpha_1),\ldots,q(\alpha_k)),
\]
where $q(\cdot)=F_{Y}^{-1}(\cdot)$ and $(\alpha_1,\ldots,\alpha_k)$ is a sequence of numbers independent of the choice of $a\in\bR^n\setminus\{0\}$. In other words, assuming that  $Y\sim \cN_{1}(\mu_Y,\sigma_Y)$ and $k\in\bN$ we get that there exists a unique sequence $\alpha_1<\ldots<\alpha_k$ for which the chain of equalities
\begin{align}
\textrm{Var}[Y \mid  Y \in (q(0),q(\alpha_1))]&=\textrm{Var}[Y \mid  Y \in [q(\alpha_1),q(\alpha_2))]\nonumber\\
&=\ldots\nonumber\\
&=\textrm{Var}[Y \mid  Y \in [q(\alpha_k),q(1))] \label{eq:same.variance}
\end{align}
is satisfied. Moreover, the sequence $(a_1,\ldots,a_k)$ is independent of $\mu_Y$ and $\sigma_Y$. For the idea of the proof (and the exact proof for the case $k=2$) we refer to~\cite[Lemma 3.3]{JawPit2015}. Combining Corollary~\ref{cor:1} and~\eqref{eq:same.variance} we get the following result, which could be seen as a generalisation of \cite[Theorem 3.1]{JawPit2015}.

\begin{theorem}\label{T:normal}
Let $X\sim \cN_{n}(\mu,\Sigma)$, $Y=aX$, and $k\in\bN$. Then, 
\begin{align}
\textrm{\emph{Var}}_{\{Y<q(\alpha_1)\}}[X] & =\textrm{\emph{Var}}_{\{q(\alpha_1)<Y<q(\alpha_2)\}}[X]\nonumber\\
& =\ldots\nonumber\\
& =\textrm{\emph{Var}}_{\{q(\alpha_{k-1})<Y<q(\alpha_k)\}}[X]=\textrm{\emph{Var}}_{\{q(\alpha_k)<Y\}}[X]\label{eq:motivation},
\end{align}
for a unique sequence $0<\alpha_1<\ldots<\alpha_k<1$, where $q(\cdot)=F_{Y}^{-1}(\cdot)$. Moreover, the sequence $(\alpha_1,\ldots,\alpha_k)$ is fixed and independent of $n,\mu,\Sigma$ and $a$.
\end{theorem}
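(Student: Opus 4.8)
The plan is to reduce \eqref{eq:motivation} to a statement about the scalar benchmark $Y$ via Corollary~\ref{cor:1}, then to the standard normal by an affine change of variables, and finally to establish existence and uniqueness of the quantile partition by a recursive intermediate-value argument built on the monotonicity of the truncated Gaussian variance.

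\emph{Reduction to $Y$.} Since $\Sigma$ is non-degenerate and $a\neq 0$, the regression vector $\beta=\beta(X|Y)=\Sigma a^{T}/(a\Sigma a^{T})$ is non-zero, so $\beta\beta^{T}$ is a fixed non-zero $n\times n$ matrix not depending on the conditioning set. Corollary~\ref{cor:1} gives, for any Borel $B,B'\subseteq\bR$ with $\bP[Y\in B],\bP[Y\in B']>0$,
\[
\textrm{Var}_{\{Y\in B\}}[X]-\textrm{Var}_{\{Y\in B'\}}[X]=\bigl(\textrm{Var}[Y\mid Y\in B]-\textrm{Var}[Y\mid Y\in B']\bigr)\beta\beta^{T},
\]
and inspecting a diagonal entry $i$ with $\beta_i\neq 0$ shows the right-hand side vanishes iff $\textrm{Var}[Y\mid Y\in B]=\textrm{Var}[Y\mid Y\in B']$. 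Hence \eqref{eq:motivation} is equivalent to the chain of scalar equalities \eqref{eq:same.variance}. (As $F_Y$ is continuous, replacing the open conditioning intervals by half-open ones changes nothing.) Next, write $Y=aX\sim\cN_1(a\mu,\sigma^{2})$ with $\sigma=\sqrt{a\Sigma a^{T}}>0$ and $Y=a\mu+\sigma Z$, $Z\sim\cN(0,1)$; then $F_Y^{-1}(\alpha)=a\mu+\sigma\,\Phi^{-1}(\alpha)$ and $\textrm{Var}[Y\mid Y\in I]=\sigma^{2}\,\textrm{Var}[Z\mid Z\in\sigma^{-1}(I-a\mu)]$ for every interval $I$, so a sequence $(\alpha_1,\dots,\alpha_k)$ satisfies \eqref{eq:same.variance} for $Y$ iff it does so for $Z$. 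In particular the sequence, once shown to be unique, depends only on $\Phi$, hence not on $n,\mu,\Sigma,a$.

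\emph{The quantile partition.} It remains to show that there is a unique $0<\alpha_1<\dots<\alpha_k<1$ making the $k+1$ conditional variances of $Z$ on the intervals $(-\infty,\Phi^{-1}(\alpha_1)),\,[\Phi^{-1}(\alpha_1),\Phi^{-1}(\alpha_2)),\,\dots,\,[\Phi^{-1}(\alpha_k),\infty)$ all equal. Put $G(\alpha,\beta):=\textrm{Var}[Z\mid\Phi^{-1}(\alpha)<Z<\Phi^{-1}(\beta)]$ for $0\le\alpha<\beta\le1$ and $M(\alpha):=G(\alpha,1)=\textrm{Var}[Z\mid Z>\Phi^{-1}(\alpha)]$. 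The analytic input is that $G$ is continuous, strictly increasing in $\beta$, strictly decreasing in $\alpha$, with $G(\alpha,\alpha^{+})=0$ and $G(0,1)=1$, and that $M$ is continuous and strictly decreasing from $M(0)=1$ to $M(1^{-})=0$; these monotonicities are where log-concavity (equivalently, the strict monotone likelihood ratio property) of the Gaussian density enters, and for $k=2$ they are exactly \cite[Lemma~3.3]{JawPit2015}. Granting them, build breakpoints recursively: for $t\in(0,1)$ let $\alpha_1(t)$ solve $G(0,\alpha_1)=t$, and, given $\alpha_j(t)<1$ and $t\le M(\alpha_j(t))$, let $\alpha_{j+1}(t)$ solve $G(\alpha_j(t),\alpha_{j+1})=t$. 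Each solution is unique, each $\alpha_j(\cdot)$ is continuous and strictly increasing in $t$ with $\alpha_j(t)\to0$ as $t\to0^{+}$ and $\alpha_j(t)\uparrow1$ at the right end of its domain. Consequently $t\mapsto t-M(\alpha_k(t))$ is continuous and strictly increasing, negative near $0$ and positive at the right end of its domain, hence has a unique zero $t_k$; the desired partition is $\alpha_j:=\alpha_j(t_k)$, $j=1,\dots,k$, which automatically satisfies $0<\alpha_1<\dots<\alpha_k<1$. Uniqueness follows since any admissible partition with common variance $t$ must coincide, breakpoint by breakpoint, with $(\alpha_1(t),\dots,\alpha_k(t))$, and the constraint $\alpha_{k+1}=1$ then forces $t=t_k$.

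\emph{Main obstacle.} The substantive point is the third step, and within it the monotonicity/limit statements for $G$ and $M$ — that strictly shrinking an interval strictly decreases the truncated normal variance, and that $\textrm{Var}[Z\mid Z>c]\to0$ as $c\to\infty$. Everything else is the recursive intermediate-value bookkeeping extending the $k=2$ case of \cite{JawPit2015}, together with the (routine) observation that the Gaussian reduction makes the answer free of $n,\mu,\Sigma,a$.
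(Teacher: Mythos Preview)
Your proof is correct and follows the same route as the paper: reduce \eqref{eq:motivation} to the scalar equalities \eqref{eq:same.variance} via Corollary~\ref{cor:1}, pass to the standard normal by affine rescaling of $Y$, and then invoke the existence and uniqueness of the quantile partition (for which the paper gives only a sketch and a reference to \cite[Lemma~3.3]{JawPit2015}). Your recursive intermediate-value construction of the partition, driven by the monotonicity of the truncated Gaussian variance, spells out in detail what the paper leaves implicit, but the overall strategy is identical.
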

We know that if the conditional covariance matrices defined in~\eqref{eq:motivation} do not coincide (for any $k\in\bN$ and the corresponding unique sequence), then $X$ is not normally distributed. Consequently, this approach could be used to construct statistical test, which checks if the sample comes from a multivariate normal distribution. When financial applications are considered, the change of dependance structure in the quantile based conditioning areas is of particular interest, at it might correspond to spatial contagion between assets or increase of risk in the tails. Note that in this particular case $Y$ might be treated as a financial portfolio and lower quantile sets might be associated with periods with low return rates. Consequently, we believe that Theorem~\ref{T:normal} might be interesting from a practical perspective. 

Note that in a simplified form, one could simply take any linear combination of univariate coordinates of $X$, do the conditioning based on (pre-specified) quantiles and compare the variances. Alternatively, one can focus on the dependance structure and compare correlation matrices instead of covariance matrices.

Let us now have a brief discussion about the unique sequence $(\alpha_1,\ldots,\alpha_k)$ given in Theorem~\ref{T:normal}. For a fixed $k\in\bN$, due to symmetry of $X$, we immediately get
\[
\alpha_i=1-\alpha_{k+1-i},\quad\quad \textrm{for } i=1,2,\ldots, k.
\]
The approximate values of sequences $(\alpha_1,\ldots,\alpha_k)$ for $k=1,2\ldots,6$\, are presented in Table~\ref{tab:alpha}.  In particular, setting $k=2$ we get the partition of approximate ratio 20\%, 60\% and 20\%. This might be associated with so called {\it 20-60-20 Rule}; see~\cite{JawPit2015} for a more detailed discussion.

\begin{table}[ht]
\caption{Approximate values of the sequence $(\alpha_1,\ldots,\alpha_k)$ for $k=1,\ldots,6$ and rounded partition ratios.}
\centering
\begin{tabular}{|r|rrrrrr|c|}
  \hline
k & $\alpha_1$ & $\alpha_2$ & $\alpha_3$ & $\alpha_4$& $\alpha_5$ &$\alpha_6$ & partition ratio\\ 
  \hline
1 & 0.500 & - & - & -&  -& - &50/50\\
2 & 0.198 & 0.802  & - & -& - & -& 20/60/20\\
3 & 0.075 & 0.500 & 0.925  & -& - &-& 7/43/43/7\\
4 & 0.027 &0.270  &0.730  &0.973 & - &-& 3/24/46/24/3\\
5 & 0.010 & 0.133 & 0.500 & 0.867 & 0.990 &-& 1/12/37/37/12/1\\
6 & 0.004 & 0.062 &0.307  &0.693  &0.938  &0.996 & 0.5/5/25/39/25/5/0.5\\
   \hline
\end{tabular}
\label{tab:alpha}
\end{table}

\section{Conditional correlation matrices}\label{sec:correl}
In Section~\ref{sec:normal} we have shown that Theorem~\ref{pr:cov} might be used to test if $X$ is (not) normally distributed; see Theorem~\ref{T:normal} for details. The choice of the quantile-based conditioning sets in~\eqref{eq:motivation} was robust in the sense that it was independent of the choice of $\Sigma$, $\mu$, $n$ and $a$. Unfortunately, this result is not true for a general elliptical random vector as equality of conditional variances for two subsets does not necessarily imply equality of the corresponding $K$-invariants. Nevertheless, it is easy to note that using the  $K'$-invariant we can rewrite Equation~\eqref{eq:main} as
\begin{equation}\label{eq:work2}
\frac{\textrm{Var}_{B}[X]}{\textrm{Var}_{B}[Y] } =k'(B)\frac{\textrm{Var}[X]}{\textrm{Var}[Y]}+\left(1-k'(B)\right)\beta\beta^{T},
\end{equation}
where $k'(B)=K'(\psi, F_Y(B_1))$ and $B=\{Y\in B_1\}$. From Equality~\eqref{eq:work2} we see that if we have two subsets $B=\{Y\in B_1\}$ and $\hat B=\{Y\in \hat B_1\}$ such that
\begin{equation}\label{eq:correlation1}
k'(B)=k'(\hat B)
\end{equation}
then the conditional matrices $\textrm{Var}_{B}[X]$ and $\textrm{Var}_{\hat B}[X]$ are proportional with proportion ratio equal to
\[
\frac{\textrm{Var}_{B}[Y]}{\textrm{Var}_{\hat B}[Y]}=\frac{k(B)}{k(B')}=\frac{K(\psi, F_Y(B_1))}{K(\psi, F_Y(\hat B_1))}.
\]
This immediately implies equality of the corresponding conditional correlation matrices. This is in fact the statement of Proposition~\ref{pr:correlation2a}, which we present without the proof.

\begin{proposition}\label{pr:correlation2a}
Let $n \geq 2$. For $k\in \{1,2\}$ let

\begin{enumerate}[1)]
\item $X_k \sim E_n(\mu_k,\Sigma_k, \psi_k)$ be an elliptical random vector;
\item $Y_k=a_kX_k$ be the corresponding benchmark for some fixed $a_k\in \bR^n\setminus \{0\}$;
\item  $B_k=\{F_{Y_k}(Y_k) \in B^\ast_i\}$ be the corresponding conditioning set for $B^\ast_k \in \cB((0,1))$ being the set of positive Lebesgue measure.
\end{enumerate}
Moreover, let us assume that covariance matrices  and weight vectors are proportional, i.e. there exists $\kappa,\lambda \in\bR_{+}$ such that $\Sigma_2= \kappa \Sigma_1$ and $a_2=\lambda a_1$. Then, the equality
\[ K'(\psi_2, B^\ast_2)=\lambda K'(\psi_1, B^\ast_1)
\]
implies the equality of  the conditional correlation matrices $\textrm{\emph{Cor}}_{B_1}[X_1]$ and $\textrm{\emph{Cor}}_{B_2}[X_2]$.
\end{proposition}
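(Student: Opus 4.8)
The plan is to reduce Proposition~\ref{pr:correlation2a} to Theorem~\ref{pr:cov} and Equation~\eqref{eq:work2} by a direct computation of the two conditional correlation matrices. First I would apply~\eqref{eq:work2} to each of the two configurations, writing $B=B_1$, $\hat B=B_2$, $k'(B)=K'(\psi_1,B_1^\ast)$ and $k'(\hat B)=K'(\psi_2,B_2^\ast)$, to obtain
\[
\frac{\textrm{Var}_{B_1}[X_1]}{\textrm{Var}_{B_1}[Y_1]} = k'(B_1)\frac{\Sigma_1}{a_1\Sigma_1 a_1^T}+\bigl(1-k'(B_1)\bigr)\beta_1\beta_1^T,\qquad
\frac{\textrm{Var}_{B_2}[X_2]}{\textrm{Var}_{B_2}[Y_2]} = k'(B_2)\frac{\Sigma_2}{a_2\Sigma_2 a_2^T}+\bigl(1-k'(B_2)\bigr)\beta_2\beta_2^T,
\]
where $\beta_i=\beta(X_i|Y_i)=\Sigma_i a_i^T/(a_i\Sigma_i a_i^T)$.

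The second step is to exploit the proportionality hypotheses $\Sigma_2=\kappa\Sigma_1$ and $a_2=\lambda a_1$. These give $a_2\Sigma_2 a_2^T=\kappa\lambda^2\, a_1\Sigma_1 a_1^T$, hence $\Sigma_2/(a_2\Sigma_2 a_2^T)=\lambda^{-2}\,\Sigma_1/(a_1\Sigma_1 a_1^T)$, and similarly $\beta_2=\Sigma_2 a_2^T/(a_2\Sigma_2 a_2^T)=\lambda^{-1}\beta_1$, so that $\beta_2\beta_2^T=\lambda^{-2}\beta_1\beta_1^T$. Substituting these into the expression for $\textrm{Var}_{B_2}[X_2]/\textrm{Var}_{B_2}[Y_2]$ and using the hypothesis $K'(\psi_2,B_2^\ast)=\lambda K'(\psi_1,B_1^\ast)$, i.e. $k'(B_2)=\lambda k'(B_1)$, one checks that the cross term and the $\Sigma$-term combine to give
\[
\frac{\textrm{Var}_{B_2}[X_2]}{\textrm{Var}_{B_2}[Y_2]}
=\lambda^{-1}\left(k'(B_1)\frac{\Sigma_1}{a_1\Sigma_1 a_1^T}+\bigl(1-\lambda^{-1}\bigr)\lambda\,\beta_1\beta_1^T\right),
\]
which is not literally $\lambda^{-1}$ times the first matrix — the coefficients of $\beta_1\beta_1^T$ differ — but both matrices have the shape $c\,M_1 + d\,\beta_1\beta_1^T$ with the \emph{same} $M_1:=\Sigma_1/(a_1\Sigma_1 a_1^T)$. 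The key observation is then that the correlation matrix is scale-invariant under multiplication by a positive scalar and, more importantly, that for matrices of the form $c M_1 + d\,\beta_1\beta_1^T$ the diagonal-rescaling that produces the correlation matrix washes out precisely when the two representations are related by an overall positive factor together with a shift of the rank-one component that preserves positivity; I would verify that the ratio $d/c$ is in fact the same for both (this is where the exponent bookkeeping in $\lambda$ matters), so the two normalized matrices are genuinely equal up to a positive scalar, and hence have the same correlation matrix.

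The main obstacle will be the algebra of the last step: making sure that after substituting the proportionality relations the \emph{ratio} of the coefficient of $\beta_1\beta_1^T$ to the coefficient of $\Sigma_1$ comes out identical in the two configurations, so that $\textrm{Var}_{B_2}[X_2]/\textrm{Var}_{B_2}[Y_2]$ really is a positive multiple of $\textrm{Var}_{B_1}[X_1]/\textrm{Var}_{B_1}[Y_1]$. Once that proportionality is established, passing from covariance to correlation is immediate, since for any positive definite $S$ and any $t>0$ one has $\textrm{Cor}[tS]=\textrm{Cor}[S]$ (the diagonal normalization cancels $t$). I would also note in passing the degenerate case $n=2$ or $k'(B_i)=0$, where $\textrm{Var}_{B_i}[X_i]$ collapses to a rank-one matrix and the correlation matrix is the all-ones matrix on the support, so the claim is trivial there; this is presumably why the hypothesis $n\ge 2$ is stated. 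The whole argument is short and I would present it as a two- or three-line display chain after recording the three scaling identities for $M_1$, $\beta_1$, and $k'$.
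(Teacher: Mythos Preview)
Your overall approach is exactly the one the paper intends: the proposition is stated without proof, but the discussion immediately preceding it derives it from~\eqref{eq:work2} by observing that if $k'(B)=k'(\hat B)$ then the two matrices $\textrm{Var}_B[X]/\textrm{Var}_B[Y]$ and $\textrm{Var}_{\hat B}[X]/\textrm{Var}_{\hat B}[Y]$ coincide, so the conditional covariance matrices are proportional and the correlation matrices agree. Your scaling identities $\Sigma_2/(a_2\Sigma_2 a_2^T)=\lambda^{-2}\Sigma_1/(a_1\Sigma_1 a_1^T)$ and $\beta_2\beta_2^T=\lambda^{-2}\beta_1\beta_1^T$ are correct, and the reduction to comparing matrices of the form $cM_1+d\,\beta_1\beta_1^T$ is the right move.

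However, there is a genuine gap at the point where you write ``I would verify that the ratio $d/c$ is in fact the same for both.'' It is not. With the stated hypothesis $k'(B_2)=\lambda\,k'(B_1)$ and your scaling identities, one obtains
\[
\frac{\textrm{Var}_{B_2}[X_2]}{\textrm{Var}_{B_2}[Y_2]}
=\lambda^{-2}\Bigl(\lambda k'(B_1)\,M_1+\bigl(1-\lambda k'(B_1)\bigr)\beta_1\beta_1^T\Bigr),
\]
so the two $d/c$ ratios are $\bigl(1-k'(B_1)\bigr)/k'(B_1)$ and $\bigl(1-\lambda k'(B_1)\bigr)/\bigl(\lambda k'(B_1)\bigr)$. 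These coincide if and only if $\lambda=1$. Since for a generic full-rank $M_1$ the correlation matrix of $cM_1+d\,\beta_1\beta_1^T$ depends nontrivially on $d/c$, the conclusion fails for $\lambda\neq 1$ as stated. Your paragraph acknowledging that the two displays are ``not literally'' proportional and then appealing to an unspecified ``shift of the rank-one component that preserves positivity'' is precisely where the argument breaks.

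The resolution is that the $\lambda$ in the hypothesis $K'(\psi_2,B_2^\ast)=\lambda\,K'(\psi_1,B_1^\ast)$ is a misprint: the paper's own motivating paragraph (around~\eqref{eq:correlation1}) states the condition as $k'(B)=k'(\hat B)$ with no extra factor, and the application in Proposition~\ref{pr:correlation2b} uses the case $\lambda=1$. With the corrected hypothesis $K'(\psi_2,B_2^\ast)=K'(\psi_1,B_1^\ast)$ your computation gives
\[
\frac{\textrm{Var}_{B_2}[X_2]}{\textrm{Var}_{B_2}[Y_2]}=\lambda^{-2}\,\frac{\textrm{Var}_{B_1}[X_1]}{\textrm{Var}_{B_1}[Y_1]},
\]
which is a positive scalar multiple, and the equality of correlation matrices follows immediately. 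So: flag the typo, drop the $\lambda$, and your proof is complete in two lines; do not try to argue around a hypothesis that is simply wrong.
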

In particular, Proposition~\ref{pr:correlation2a} implies the following variation of Theorem~\ref{T:normal} for correlation matrices:

\begin{proposition}\label{pr:correlation2b}
Let $k \geq 2$ and let $\psi$ be a characteristic generator. Let the partition $\{B^\ast_i\}_{i=1}^k$ of the unit interval $(0,1)$ be such that
\[
K'(\psi, B^\ast_1)=K'(\psi, B^\ast_2)=\ldots=K'(\psi, B^\ast_k).
\]
Then, for any elliptical random vector $X \sim E_n(\mu, \Sigma, \psi)$ and $Y=aX$ (where $a \neq 0$) we get
\begin{equation}\label{eq:cor.equal}
\textrm{\emph{Cor}}_{B_1}[X]=\ldots=\textrm{\emph{Cor}}_{B_k}[X],
\end{equation}
for $B_i = \{ F_Y(Y) \in B^\ast_i\}$,\, $i=1,\ldots,k$.
\end{proposition}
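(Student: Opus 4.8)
The plan is to deduce Proposition~\ref{pr:correlation2b} from Proposition~\ref{pr:correlation2a} by the trivial specialisation in which both elliptical vectors in that proposition are taken to be the \emph{same} vector $X$ with the \emph{same} weight $a$, and only the conditioning sets vary. Concretely, for each pair of indices $i,j\in\{1,\ldots,k\}$ I would apply Proposition~\ref{pr:correlation2a} with $X_1=X_2=X$, $\psi_1=\psi_2=\psi$, $a_1=a_2=a$, $B^\ast_1=B^\ast_i$, $B^\ast_2=B^\ast_j$, and the proportionality constants $\kappa=1$, $\lambda=1$. The hypothesis $K'(\psi,B^\ast_i)=K'(\psi,B^\ast_j)$ is exactly the case $\lambda=1$ of the condition $K'(\psi_2,B^\ast_2)=\lambda K'(\psi_1,B^\ast_1)$ required there, so Proposition~\ref{pr:correlation2a} yields $\textrm{Cor}_{B_i}[X]=\textrm{Cor}_{B_j}[X]$ with $B_i=\{F_Y(Y)\in B^\ast_i\}$. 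Since this holds for every pair $i,j$, the full chain of equalities~\eqref{eq:cor.equal} follows.

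For a self-contained argument (in case one prefers not to invoke Proposition~\ref{pr:correlation2a}, which is stated without proof), I would instead argue directly from~\eqref{eq:work2}. The key observation is that the conditioning event $B_i=\{F_Y(Y)\in B^\ast_i\}$ can equivalently be written as $\{Y\in B^{(i)}_1\}$ for the Borel set $B^{(i)}_1=F_Y^{-1}(B^\ast_i)$, so Theorem~\ref{pr:cov} and its rescaled form~\eqref{eq:work2} apply with $k'(B_i)=K'(\psi,F_Y(B^{(i)}_1))=K'(\psi,B^\ast_i)$ (using continuity of $F_Y$, so that $F_Y(F_Y^{-1}(B^\ast_i))=B^\ast_i$ up to a Lebesgue-null set, which does not affect the invariant). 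Denote the common value of $K'(\psi,B^\ast_i)$ by $k'_0$. Then~\eqref{eq:work2} gives, for each $i$,
\[
\frac{\textrm{Var}_{B_i}[X]}{\textrm{Var}_{B_i}[Y]}
= k'_0\,\frac{\textrm{Var}[X]}{\textrm{Var}[Y]}+\bigl(1-k'_0\bigr)\beta\beta^{T},
\]
and the right-hand side is manifestly independent of $i$ (it depends only on $\psi$ through $k'_0$, on $\Sigma$ and $a$ through $\textrm{Var}[X],\textrm{Var}[Y],\beta$). Hence the normalised matrices $\textrm{Var}_{B_i}[X]/\textrm{Var}_{B_i}[Y]$ coincide for all $i$.

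It then remains to pass from equality of these normalised covariance matrices to equality of the correlation matrices. This is the only genuine step and it is elementary: if $M_i:=\textrm{Var}_{B_i}[X]$ and $M_i = c\, M$ for a common positive-definite matrix $M$ and positive scalars $c=\textrm{Var}_{B_i}[Y]$ (which are positive because $\textrm{Var}[Y]>0$ and the conditioning events have positive probability, so the conditional variance of the nondegenerate $Y$ is strictly positive), then the correlation matrix obtained from $M_i$ by the standard normalisation $[M_i]_{pq}/\sqrt{[M_i]_{pp}[M_i]_{qq}}$ is unchanged under scaling $M_i\mapsto cM_i$, since the factor $c$ cancels between numerator and denominator. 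Therefore $\textrm{Cor}_{B_i}[X]$ equals the correlation matrix of $M$ for every $i$, which is~\eqref{eq:cor.equal}. I do not anticipate a real obstacle here; the main thing to be careful about is the measure-zero subtlety in $F_Y(F_Y^{-1}(B^\ast_i))$, handled by the standing assumption that $X$ (hence $Y$) has a continuous distribution function, and the nondegeneracy of the conditional variances of $Y$.
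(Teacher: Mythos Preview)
Your proposal is correct and matches the paper's approach: the paper simply states that Proposition~\ref{pr:correlation2b} follows from Proposition~\ref{pr:correlation2a}, which is exactly your first paragraph (specialising to $X_1=X_2=X$, $a_1=a_2=a$, $\kappa=\lambda=1$). Your self-contained alternative via~\eqref{eq:work2} is precisely the argument the paper sketches in the paragraph preceding Proposition~\ref{pr:correlation2a}, so this too is in line with the paper's reasoning.
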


For many elliptical distributions the unit interval partition mentioned in Proposition~\ref{pr:correlation2b} exists and can be easily approximated. The exemplary (approximate) results for t-student distribution are presented in Table~\ref{tab:alpha2}.

\begin{table}[ht]
\caption{Approximate t-student quantile values for number of partitioning sets $k=2,3$ and degrees of freedom $v=3,5,7,10,12,15,25,50,100$, for which the corresponding conditional correlation matrices are equal to each other. For completeness, we also present results for the Gaussian case ($v=\infty$).}
\centering
\begin{tabular}{|r|r|rrr|c|}
  \hline
k & v & $\alpha_1$ & $\alpha_2$ & $\alpha_3$ & partition ratio\\ 
  \hline
    	&3 & 0.045 &0.955& - & 4.5/91.0/4.5\\
	&5 & 0.123 &0.877 & - & 12.3/75.4/12.3\\
	&7 & 0.150 & 0.850& - & 15.0/75.0/15.0 \\
   	&10 &0.166 & 0.834& - & 16.6/66.8/16.6\\
2	&12 &0.173 &0.827& - &17.3/65.4/17.3 \\
	&15 &0.179 & 0.821& - &17.9/64.3/17.9\\
	&25 & 0.187 & 0.813 & - &18.7/62.7/18.7 \\
	&50 & 0.193 & 0.807 & - & 19.3/61.4/19.3\\
	&100 & 0.196 &0.804 & - &  19.6/60.8/19.6\\
	& $\infty$ & 0.198 & 0.802 & - &  19.8/60.4/19.8\\
  \hline
    	&3 & 0.002 &0.500 &0.998& 0.2/49.8/49.8/0.2\\
	&5 & 0.016 & 0.500 & 0.984 & 1.6/48.4/48.4/1.6\\
	&7 & 0.032 &0.500 &0.968& 3.2/46.8/46.8/3.2\\
   	&10 &0.045 &0.500 &0.955& 4.5/45.5/45.5/4.5\\
3	&12 &0.051& 0.500 &0.949&  5.1/44.9/44.9/5.1\\
	&15 &0.055 &0.500 & 0.945&  5.5/44.5/44.5/5.5\\
	&25 & 0.063 &0.500 & 0.937 & 6.3/43.7/43.7/6.3\\
	&50 & 0.069 &0.500 &0.931& 6.9/43.1/43.1/6.9\\
	&100 & 0.072&0.500&0.928& 7.2/42.8/42.8/7.2 \\
	&$\infty$ & 0.075 & 0.500 & 0.925 & 7.5/42.5/42.5/7.5 \\
\hline

\end{tabular}
\label{tab:alpha2}
\end{table}

\section*{Acknowledgements}
\noindent The first  author acknowledges the support   from National Science Centre, Poland, via project 2015/17/B/HS4/00911.

 \end{document}